\documentclass{amsart}

\usepackage{amsmath}
\usepackage{graphicx}
\usepackage{amsfonts}
\usepackage{amssymb}
\usepackage{mhequ}
\usepackage{Symbols}

\def\eref#1{(\ref{#1})}
\def\E{\mathbf{E}}
\def\P{\mathbf{P}}
\def\one{\mathbf{1}}

\newtheorem{theorem}{Theorem}
\theoremstyle{plain}

\newtheorem{corollary}[theorem]{Corollary}

\newtheorem{proposition}[theorem]{Proposition}
\newtheorem{assumption}[theorem]{Assumption}
\newtheorem{remark}[theorem]{Remark}

\numberwithin{equation}{section}
\numberwithin{theorem}{section}

\begin{document}
\title{Ornstein-Uhlenbeck Processes on Lie Groups}
\author{Fabrice Baudoin, Martin Hairer, Josef Teichmann}
\address{Universit\'e Paul Sabatier, Institut des Math\'ematiques, 118, Rue de Narbonne, Toulouse, Cedex 31062, France; The University of Warwick, Mathematics Department, CV4 7AL Coventry, United Kingdom; Department of mathematical methods in economics, Vienna University of Technology, Wiedner Hauptstrasse 8--10, A-1040 Wien, Austria}
\email{fbaudoin@cict.fr, m.hairer@warwick.ac.uk, jteichma@fam.tuwien.ac.at}
\thanks{The first and third author gratefully acknowledge the support from the FWF-grant Y 328 (START prize from the Austrian Science Fund). The second author gratefully acknowledges the support from the EPSRC fellowship EP/D071593/1. All authors are grateful for the warm hospitality at the Mittag-Leffler Institute in Stockholm and at the Hausdorff Institute in Bonn.}
\begin{abstract}
We consider Ornstein-Uhlenbeck processes (OU-processes) associated to hypoelliptic diffusion processes on finite-dimensional Lie groups: let $ \mathcal{L} $ be a hypoelliptic, left-invariant ``sum of the squares''-operator on a Lie group $ G $ with associated Markov process $ X $, then we construct OU-processes by adding negative horizontal gradient drifts of functions $ U $.  In the natural case $ U(x) = - \log p(1,x) $, where $ p(1,x) $ is the density of the law of $ X $ starting at identity $ e $ at time $ t =1 $ with respect to the right-invariant Haar measure on $G$, we show the Poincar\'e inequality by applying the Driver-Melcher inequality for ``sum of the squares'' operators on Lie groups. The resulting Markov process is called the natural OU-process associated to the hypoelliptic diffusion on $ G $.

We prove the global strong existence of these OU-type processes on $ G $ under an integrability assumption on $U$. The Poincar\'e inequality for a large class of potentials $U$ is then shown by a perturbation technique. These results are applied to obtain a hypoelliptic equivalent of standard results on cooling schedules for simulated annealing on compact homogeneous spaces $M$.
\end{abstract}
\maketitle

\section{Introduction}

We consider an invariant hypo-elliptic diffusion process $ X $,
$$
dX^x_t = \sum_{i=1}^d V_i(X^x_t) \circ dB^i_t, \; X^x_0 = x
$$
on a connected Lie group $ G $ together with a right-invariant Haar measure $ \mu $, which is then also invariant for the diffusion process $ X $. The vector fields $ V_1,\ldots,V_d $ are assumed to be left-invariant vector fields, and their brackets generate the Lie algebra. In the spirit of sub-riemannian geometry the hypo-elliptic diffusion process can be used to define a metric (the Carnot-Carath\'eodory metric $d$) a geodesic structure and the notion of a gradient (the horizontal gradient $\operatorname{grad}_{hor}$) on $ G $. The density of the law of $ X^e_t $ with respect to $ \mu $ is denoted by $ p(t,.) $ and smooth by H\"ormander's theorem.

Fix $ t>0 $. The natural Ornstein-Uhlenbeck process on $G$ associated to $ X $ will be
$$
dY_t = \frac{1}{2} \operatorname{grad}_{hor} p(\tau,Y_t) dt + \sum_{i=1}^d V_i(Y_t) \circ dB^i_t
$$
out of two reasons: first the law of $ X^e_t $ is an invariant measure for $ Y $ and, second, there is a spectral gap for $ Y $ if $ X $ satisfies a Driver-Melcher inequality (see section 4 for precise details). We can easily extend all results to compact homogeneous spaces $M$.

The possible exponential convergence rate is then applied for new simulated annealing algorithms. The interest in those new algorithms lies in the fact that there are less Brownian motions than space dimensions of the optimization problem involved, and that the stochastic differential equations might have a smaller complexity and is therefore easier to evaluate. 

For this purpose we consider cases where the size of the spectral gap for the previously introduced process $Y$ is $ \frac{1}{2K \tau} $ for some constant $ K > 0 $ (see section 5 for precise details). This holds true for instance on $ SU(2) $ or on the Heisenberg tori. Let $ U $ be a smooth potential on $M$. We regard an equation of the type
$$
dZ_t = - \frac{1}{2} \operatorname{grad}_{hor} U(Z_t) dt + \eps \sum_{i=1}^d V_i(Z_t) \circ dB^i_t
$$
as a compact perturbation of the natural OU process in order to get an estimate for its spectral gap, which can be expressed by
$$
| U(x) + \eps^2 \log p(\eps^2,x_0,x) | \leq D
$$
for all $ x \in M $ and some constant $ D $. This amounts to a comparison of $ U $ with the square of the Carnot-Carath\'eodory metric $ {d(x_0,x)} $ due to short-time asymptotics of the heat kernel on the compact manifold $M$.

A concatenation of trajectories of such equations under a cooling schedule $ t \mapsto \eps(t)= \frac{c}{\sqrt{\log(R+t)}} $ leads then to the desired simulated annealing algorithms. The analytical arguments are strongly inspired by the seminal works of R.~Holley, S.~Kusuoka and D.~Stroock \cite{HolStr88} and \cite{HolKusStr89}.

\section{Preparations from functional analysis}

We consider a finite-dimensional, connected Lie group $ G $ with Lie algebra $ \mathfrak{g} $, its right-invariant Haar measure $ \mu $ and a family of left-invariant vector fields $ V_1,\ldots,V_d \in \mathfrak{g} $. We assume that H\"or\-mander's condition holds, i.e.~ the sub-algebra generated by $ V_1,\ldots,V_d $ coincides with $ \mathfrak{g} $.

We consider furthermore a stochastic basis $ (\Omega,\mathcal{F},\P) $ with a $d$-dimensional Brownian motion $ B $ and the Lie group valued process
\begin{equ}[e:SDEflat]
dX^x_t = \sum_{i=1}^d V_i(X^x_t)\circ dB^i_t, \quad X^x_0=x \in G\;.
\end{equ}
The generator of this process is denoted by $ \mathcal{L} $, we have
$$
\mathcal{L} = \frac{1}{2} \sum_{i=1}^d V_i^2\;,
$$
where we interpret the vector fields as first order differential operators on $ C^{\infty}(G,\mathbb{R}) $. Furthermore, we define a semigroup $P_t$ acting on bounded measurable functions  $ f:G \to \mathbb{R} $ by
$$
P_t f(x) = \E(f(X^x_t))\;.
$$
This semigroup can be extended to a strongly continuous semigroup on $L^2(G,\mu)$, which we will denote by the same letter $ P_t $. The carr\'e du champ operator $ \Gamma $ is defined for functions $ f $, where it makes sense, by
\begin{equ}[e:defChamp]
\Gamma(f,g) = \mathcal{L} (fg) - f \mathcal{L}g - g\mathcal{L}f\;.
\end{equ}
In our particular case, we obtain immediately
$$
\Gamma(f,f) = \sum_{i=1}^d {(V_if)}^2\;.
$$
Notice that the carr\'e du champ operator does not change if we add a drift to the generator $ \mathcal{L} $.

Due to the right invariance of the Haar measure $ \mu $ and the left-invariance of the vector fields $V_i$, the operator $ \mathcal{L} $ is symmetric (reversible) with respect to $ \mu $ and therefore $ \mu $ is an infinitesimal invariant measure in the sense that $\int \CL f(x) \, \mu(dx) = 0$ for all smooth compactly supported test functions $f$. Furthermore, due to the symmetry of $\mathcal L$ we have from \eref{e:defChamp} the relation
\begin{equ}[e:symL]
2 \int f \mathcal{L} g \mu = - \int \Gamma(f,g) \mu
\end{equ}
for all $ f \in \CC_0^\infty(G)$ and $ g \in \CC^{\infty}(G)$.

Let now $ U : G \to \mathbb{R} $ be an arbitrary smooth function such that
$$
Z:= \int_G \exp(-U(x)) \mu(dx) < \infty\;.
$$
We consider the modified generator
$$
\mathcal{L}^U := \mathcal{L} - \frac{1}{2}\Gamma(U,\cdot\,)\;,
$$
Notice that $\mu^U = \exp(-U) \mu $ is an infinitesimal invariant (finite) measure for $ \mathcal{L}^U $, since,
by \eref{e:symL},
\begin{equs}
\int \mathcal{L}^U f  \mu^U &= \int (\mathcal{L} f) \exp(-U) \mu - \frac{1}{2}\int \Gamma (U,f) \exp(-U)\mu \\
&= - \frac{1}{2} \int \Gamma (f, \exp(-U)) \mu - \frac{1}{2}\int \Gamma (f,U) \exp(-U)\mu = 0\;,
\end{equs}
and notice that $ \mathcal{L}^U $ is symmetric on $ L^2(\mu^U) $ in the sense that
\begin{equs}\eref{e:gap}
\int f \mathcal{L}^U g \mu^U = \int g \mathcal{L}^U f \mu^U = - \frac{1}{2} \int \Gamma(f,g) \mu^U
\end{equs}
for all smooth compactly supported test functions $ f,g: G \to \mathbb{R} $. The last equality is often referred to as integration by parts.

By definition and by integration by parts the operator $ \mathcal{L}^U $ has a spectral gap at $ 0 $ of size $ a > 0 $ in $L^2(\mu^U) $
if and only if
$$
\int_G \Gamma(f,f)(x)  \mu^U(dx) \geq 2 a \int_G {f(x)}^2 \mu^U(dx)\;,
$$
for all compactly supported smooth functions $ f $ on $ G $ satisfying
$$
\int_G f(x) \mu^U(dx) = 0\;.
$$

If we want to write an inequality for all test functions $ f $, it reads like
\begin{equ}[e:gap]
\int_G \Gamma(f,f)(x) \mu^U(dx)  \geq
 2a \Bigl(\int_G {f(x)}^2 \mu^U(dx) \int_G \mu^U(dx) - (\int_G f(x) \mu^U(dx))^2 \Bigr)
\end{equ}
for all test functions $ f \in \CC_0^\infty(G)$.

\section{Strong existence of OU-processes with values in Lie groups}

Let $G$ be a finite-dimensional, connected Lie group. We consider now the special case of the `potential' $ W_t(x) = - \log p(t,x) $, $ t > 0$, where
$p(t,x)$ is the density of the law of $X_t^e$ with respect to $\mu$.
By H\"ormander's Theorem \cite{HorArt,HorBook}, the function $(t,x) \mapsto p(t,x) $ is a positive and smooth function, hence the potential $ W_t $ is as
in the previous section. We write for short $ \mathcal{L}^t $ instead of $ \mathcal{L}^{W_t} $ and we call the associated Markov process the \textbf{natural OU-process on $G$ associated to the diffusion $X$}. We show that we have in fact global strong solutions for the corresponding Stratonovich SDE with values in $ G $. The next proposition is slightly more general.

\begin{proposition}\label{globalexistence}
Consider a smooth potential $ U : G \to \mathbb{R} $ such that
\begin{equ}
\int \exp(-U(x))\,\mu(dx) < \infty\;.
\end{equ}
Consider the following Stratonovich SDE with values in $ G $:
\begin{equ}[e:SDE]
dY^y_t = V_0(Y^y_t) dt + \sum_{i=1}^d V_i(Y^y_t)\circ dB^i_t\;, \quad Y^y_0=y \in G\;,
\end{equ}
where $ V_0 f = -\frac{1}{2}\Gamma(U,f) $ for smooth test functions $ f $.
Then there is a global strong solution to \eref{e:SDE} for all initial values $ y \in G $.
\end{proposition}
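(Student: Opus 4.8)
The plan is to reduce the assertion to the non-explosion of the (locally well-defined) solution, and then to obtain non-explosion from the fact that $\mu^U=\exp(-U)\,\mu$ is a \emph{finite} reversible measure, via the standard cutoff criterion for conservativeness of Dirichlet forms; this is the only place where the integrability hypothesis enters.

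Since $V_0=-\tfrac12\sum_{i=1}^d (V_iU)V_i$ and $V_1,\dots,V_d$ are all smooth, standard SDE theory on manifolds (solving in charts, or as the flow of a smooth approximating equation) yields, for each $y\in G$, a unique maximal strong solution $(Y^y_t)_{0\le t<\zeta(y)}$, and on $\{\zeta(y)<\infty\}$ the path $Y^y$ must leave every compact subset of $G$ as $t\uparrow\zeta(y)$. It therefore suffices to prove $\P(\zeta(y)=\infty)=1$ for every $y$. By Section~2, $\mu^U$ is finite and $\mathcal L^U$ is infinitesimally invariant and symmetric with respect to it, with $\int f\,\mathcal L^U g\,d\mu^U=-\tfrac12\int\Gamma(f,g)\,d\mu^U$ and $\Gamma(f,f)=\sum_i(V_if)^2$; hence $\mathcal E^U(f,g):=\tfrac12\int_G\Gamma(f,g)\,d\mu^U$, defined on $C_0^\infty(G)$, is closable, and its closure is a strongly local, regular, symmetric Dirichlet form on $L^2(\mu^U)$ with sub-Markovian semigroup $P^U_t$. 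Since $\mathcal L^U$ is a H\"ormander operator (the $V_i$ alone are bracket-generating), it is hypoelliptic and $P^U_t$ has a smooth transition kernel. Up to explosion the SDE solution $Y^y$ is, by It\^o's formula, a diffusion whose generator extends $\mathcal L^U$ on $C_0^\infty(G)$; identifying it with the diffusion attached to $\mathcal E^U$ then gives $\P(\zeta(y)>t)=P^U_t\one(y)$, and the problem reduces to showing that $\mathcal E^U$ is \emph{conservative}, i.e.\ $P^U_t\one\equiv1$.

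For the conservativeness --- the crux --- it suffices to produce $\chi_n\in\mathcal D(\mathcal E^U)$ with $0\le\chi_n\le1$, $\chi_n\uparrow1$ pointwise and $\mathcal E^U(\chi_n,\chi_n)\to0$. Because $G$ is connected and the $V_i$ satisfy H\"ormander's condition, the Carnot--Carath\'eodory distance $d(e,\cdot)$ is finite, locally Lipschitz, induces the manifold topology, has relatively compact sublevel sets, and obeys the eikonal bound $\Gamma\big(d(e,\cdot),d(e,\cdot)\big)\le1$ a.e. Fix a smooth $\psi\colon[0,\infty)\to[0,1]$ with $\psi\equiv1$ on $[0,1]$, $\psi\equiv0$ on $[2,\infty)$, and set $\chi_n=\psi\big(d(e,\cdot)/n\big)$: then $\chi_n$ is Lipschitz with compact support, hence in $\mathcal D(\mathcal E^U)$, $\chi_n\uparrow1$, and
\[
\Gamma(\chi_n,\chi_n)=\frac1{n^2}\,\psi'\big(d(e,\cdot)/n\big)^2\,\Gamma\big(d(e,\cdot),d(e,\cdot)\big)\le\frac{\|\psi'\|_\infty^2}{n^2}\;,
\]
so $\mathcal E^U(\chi_n,\chi_n)\le\tfrac{1}{2n^2}\|\psi'\|_\infty^2\,\mu^U(G)\to0$ precisely because $\mu^U(G)=\int\exp(-U)\,d\mu<\infty$. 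The cutoff criterion then gives $P^U_t\one=1$ $\mu^U$-a.e.; and since $P^U_t\one(\cdot)=\int_G p^U_t(\cdot,z)\,\mu(dz)$ is continuous by hypoellipticity, in fact $P^U_t\one\equiv1$ everywhere, whence $\P(\zeta(y)=\infty)=1$ for all $y$.

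I expect the only genuinely delicate point to be the identification used above --- that the maximal SDE solution coincides, up to its explosion time, with the minimal diffusion of $\mathcal E^U$, i.e.\ a Markov-uniqueness/essential-self-adjointness statement for $\mathcal L^U$ on $C_0^\infty(G)$. This can be extracted from the same cutoff sequence $\chi_n$ together with the completeness of $G$ for the Carnot--Carath\'eodory metric; the rest is routine SDE theory together with the textbook conservativeness criterion.
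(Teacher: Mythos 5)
Your argument is correct, but it takes a genuinely different route from the paper's. Both proofs reduce the claim to non-explosion and both identify the killed SDE semigroup $\CP_t f(y)=\E\bigl(f(Y^y_t)\one_{\zeta_y>t}\bigr)$ with an analytically defined semigroup on $L^2(\mu^U)$; they diverge in how they then show $\CP_t\one=\one$. The paper argues operator-theoretically: citing Chernoff and X.-M. Li for essential self-adjointness of $\CL^U$ on $\CC_0^\infty(G)$, it notes that $\one\in L^2(\mu^U)$ (this is where $\int e^{-U}\,d\mu<\infty$ enters) and that $\int \CL^U\psi\,d\mu^U=0$ for all test functions $\psi$, so $\one$ lies in the domain of $(\CL^U)^*=\CA$ with $\CA\one=0$, whence $\CP_t\one=\one$. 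You instead run the Dirichlet-form conservativeness criterion with cutoffs built from the Carnot--Carath\'eodory distance, the integrability of $e^{-U}$ entering through $\mu^U(G)<\infty$ in the bound on $\mathcal E^U(\chi_n,\chi_n)$. What each buys: the paper's route is short modulo the cited self-adjointness result and makes the mechanism transparent ($\one$ is annihilated by the adjoint); yours dispenses with essential self-adjointness entirely --- for the identification of the killed SDE process with the minimal (Friedrichs) form semigroup, exhaustion by relatively compact open sets suffices, so Markov uniqueness rather than the full strength of self-adjointness is what you actually need --- at the price of importing sub-Riemannian facts (completeness of the left-invariant CC metric with relatively compact balls, the eikonal bound $\Gamma(d,d)\le 1$ a.e., and membership of compactly supported CC-Lipschitz functions in the form domain). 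The one step you should not wave at too quickly is precisely the one you flag: that the semigroup of the SDE killed at explosion is the \emph{minimal} one attached to the closure of the form on $\CC_0^\infty(G)$, and not some other sub-Markovian extension; with that supplied (it is standard), your proof is complete and is essentially Takeda's ``finite reversible measure plus complete intrinsic metric implies conservativeness'' criterion.
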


\begin{proof}
Since the coefficients defining \eref{e:SDE} are smooth by assumption, there exists a
unique strong solution up to the explosion time
\begin{equ}
\zeta_y = \inf \{t\,:\, \lim_{\tau \to t} Y_\tau^y = \infty\}\;.
\end{equ}
We then define a semigroup $\CP_t$ on $L^2(\mu^U)$ by
\begin{equ}[e:defPt]
\bigl(\CP_t f\bigr)(y) = \E \bigl(f(Y_t^y) \one_{\zeta_y > t} \bigr)\;.
\end{equ}
It can be shown in the exact same way as in \cite{Chern73,xuemeiThesis} that $\CP_t$ is a strongly continuous
contraction semigroup and that its generator $\CA$ coincides with $\CL^U$ on the set
$\CC_0^\infty(G)$ of compactly supported smooth functions.

On the other hand, setting $\CD(\CL^U) = \CC_0^\infty(G)$, one can show as in \cite{Chern73,xuemeiThesis} that $\CL^U$ is essentially self-adjoint, so that one must have $\CA = \overline {\CL^U} = (\CL^U)^*$.
In particular, since the constant function $\one$ belongs to $L^2(G, \mu^U)$ by the integrability
of $\exp(-U)$ and since $\int \bigl(\CL^U \psi\bigr)(x) \,\mu^U(dx) = 0$ for any test function
$\psi \in \CC_0^\infty(G)$, $\one$ belongs to the domain of $(\CL^U)^*$ and
therefore also to the domain of $\CA$. This then implies that $\CP_t \one = \one$ by the same
argument as in \cite{xuemeiThesis}. In particular, coming back to the definition \eref{e:defPt} of $\CP_t$,
we see that $\P(\zeta_y = \infty) = 1$ for every $y$, which is precisely the non-explosion
result that we were looking for.
\end{proof}

\begin{remark}
While this argument shows that, given a fixed initial condition $y$, there exists a unique global
strong solution $Y_t^y$ to \eref{e:SDE}, it does not prevent more subtle kinds of explosions,
see for example \cite{DavidCounterexample}.
\end{remark}

By Proposition \ref{globalexistence} and since $p(t,x)$ is smooth and integrable, it follows immediately that the OU-process exists globally in a strong sense.

\begin{corollary}
For any given $\tau>0$, the process
$$
dY^y_t = V_0(Y^y_t) dt + \sum_{i=1}^d V_i(Y^y_t)\circ dB^i_t, \quad Y^y_0=y \in G,
$$
with $ V_0 f = - \frac{1}{2} \Gamma(W_\tau,f) $ has a global strong solution.
\end{corollary}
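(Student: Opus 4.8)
The plan is to deduce this directly from Proposition~\ref{globalexistence} applied to the particular potential $U = W_\tau$, so the only work is to verify that $W_\tau$ satisfies the two hypotheses of that proposition: smoothness, and $\mu$-integrability of $\exp(-W_\tau)$.

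For smoothness, recall that $p(\tau,\cdot)$ is the density of the law of $X_\tau^e$ with respect to the right-invariant Haar measure $\mu$. By H\"ormander's theorem the map $(t,x)\mapsto p(t,x)$ is smooth on $(0,\infty)\times G$, and by the strong minimum principle (equivalently, by the support theorem together with H\"ormander's condition, which makes every point of $G$ reachable) it is strictly positive there. Hence $W_\tau = -\log p(\tau,\cdot)$ is a well-defined smooth function on $G$, and the associated drift $V_0 f = -\frac12\Gamma(W_\tau,f)$ has smooth coefficients.

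For integrability, observe that $\exp(-W_\tau(x)) = p(\tau,x)$, so that
\begin{equ}
\int_G \exp(-W_\tau(x))\,\mu(dx) = \int_G p(\tau,x)\,\mu(dx) = 1 < \infty\;,
\end{equ}
since $p(\tau,\cdot)$ is by definition a probability density with respect to $\mu$. Both hypotheses of Proposition~\ref{globalexistence} are therefore met, and the claimed global strong existence follows at once. I do not expect any genuine obstacle: the entire analytic content sits in Proposition~\ref{globalexistence}, and the only mild point to be careful about is the strict positivity of the heat kernel, which is what makes $\log p(\tau,\cdot)$ — and hence the drift — smooth on all of $G$.
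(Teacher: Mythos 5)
Your proposal is correct and coincides with the paper's own argument: the corollary is obtained by applying Proposition~\ref{globalexistence} to $U = W_\tau$, using that $p(\tau,\cdot)$ is smooth and strictly positive (so $W_\tau$ is a smooth potential) and that $\int_G p(\tau,x)\,\mu(dx) = 1$ gives the required integrability. You simply spell out the positivity and normalisation points that the paper states without elaboration.
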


\begin{remark}
More traditional Lyapunov-function based techniques seem to be highly non-trivial to apply for this
situation, due to the lack of information on the behaviour of $U(y)$ at large $y$. In view of
\cite{BA88CL,lea87a,lea87b}, it is tempting to conjecture that one has the asymptotic
\begin{equ}[e:asympt]
\lim_{\tau \to 0} {\tau}^2 \d_{\tau} \log p(\tau,y) = d^2(e,y)\;,
\end{equ}
and that the limit holds uniformly over compact sets $K$ that do not contain the origin $e$.
(Note that it follows from \cite{BA88CL} that this is true provided that $K$ does not intersect the
cut-locus.) If it were the case that \eref{e:asympt} holds, possible space-time scaling properties of
$p(\tau,x)$ could imply that, for every $\tau >0$, there exists a compact set $K$ such that
$\CL p(\tau,x) = \d_{\tau} p(\tau,x) > 0$ for $x \not \in K$. On the other hand, one has
$$
\CL^{\tau} W_{\tau} = -\frac{1}{2} \Gamma(W_{\tau},W_{\tau}) + \mathcal{L}W_{\tau} = - \frac{\mathcal{L}p(\tau,.)}{p(\tau,.)}\;,
$$
so that this would imply that $W_{\tau}$ is a Lyapunov function for the corresponding OU-process leading to another proof of the previous corollary.
\end{remark}

\section{Spectral Gaps for natural OU-processes}

Next we consider the question if $ \mathcal{L}^t $ admits a spectral gap on $ L^2(p_t \, \mu) $ for $ t > 0 $, which turns out to be a consequence of the Driver-Melcher inequality (see \cite{mel:04}).

\begin{theorem}\label{basic_equivalence}
The following assertions are equivalent:
\begin{itemize}
\item The operator $ \mathcal{L}^t $ has a spectral gap of size $ a_t > 0 $ on $ L^2(p_t \, \mu) $ for all $ t > 0 $, and a positive function $a:\mathbb{R}_{>0} \to \mathbb{R}_{>0} $.
\item The local estimate
$$
P_t (\Gamma(f,f))(g) \geq 2a_t ((P_t f^2)(g) - {((P_tf)(g))}^2)
$$
holds true for all test functions $ f :G \to \mathbb{R}$, for all $ t > 0$ and a positive function $a:\mathbb{R}_{>0} \to \mathbb{R}_{>0} $ at one (and therefore all) point $ g \in G $.
\end{itemize}
Furthermore, if we know that
$$
\Gamma(P_tf,P_tf)(e) \leq \phi(t) P_t(\Gamma(f,f))(e)
$$
holds true for all test functions $ f \in \CC_0^\infty(G)$, all $ t \geq 0 $, and a strictly positive locally integrable function $ \phi:\mathbb{R}_{\geq 0} \to \mathbb{R}_{> 0} $, then we can choose $a_t$ by
$$
a_t \int_0^t \phi(t-s)ds = \frac{1}{2}\;,
$$
for $ t > 0$ and the two equivalent assertions hold true.
\end{theorem}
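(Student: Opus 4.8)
The plan is to dispatch the three assertions in turn. The one structural fact used everywhere is that, because $V_1,\dots,V_d$ are left-invariant, both the semigroup $P_t$ and the carr\'e du champ $\Gamma$ commute with left translations $L_g$: $P_t(f\circ L_g)=(P_tf)\circ L_g$ and $\Gamma(f\circ L_g,f\circ L_g)=\Gamma(f,f)\circ L_g$.

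\emph{Equivalence of the first two assertions.} I would start from the Poincar\'e characterization of the spectral gap recorded just before the theorem: for fixed $t>0$, $\mathcal{L}^t$ has a gap of size $a_t$ on $L^2(p_t\,\mu)$ iff
$$\int_G \Gamma(f,f)\,p_t\,d\mu\ \ge\ 2a_t\Big(\int_G f^2\,p_t\,d\mu-\big(\int_G f\,p_t\,d\mu\big)^2\Big)$$
for all $f\in\CC_0^\infty(G)$ (using $\int p_t\,d\mu=1$). Now for any reasonable $h$ one has $\int_G h(x)\,p(t,x)\,\mu(dx)=\E\big(h(X_t^e)\big)=(P_th)(e)$; applying this to $h=\Gamma(f,f)$, $h=f^2$ and $h=f$ rewrites the displayed inequality as exactly the local estimate at the point $g=e$. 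Since this is a single inequality written two ways, the two assertions are equivalent. Finally, feeding $f\circ L_g$ into the local estimate at $e$ and using the commutation relations above turns it into the local estimate at $g$, and conversely, so the estimate at one point is equivalent to the estimate at every point.

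\emph{The sufficient condition.} Fix $t>0$ and $f\in\CC_0^\infty(G)$ and interpolate along the semigroup. Set
$$\psi(s)=P_s\big((P_{t-s}f)^2\big)(e),\qquad s\in[0,t],$$
so that $\psi(0)=\big((P_tf)(e)\big)^2$ and $\psi(t)=(P_tf^2)(e)$. Differentiating and using $\partial_s(P_{t-s}f)=-\mathcal{L}P_{t-s}f$, $\partial_s(P_sh)=P_s\mathcal{L}h$ and $\Gamma(g,g)=\mathcal{L}(g^2)-2g\,\mathcal{L}g$, I get $\psi'(s)=P_s\big(\Gamma(P_{t-s}f,P_{t-s}f)\big)(e)$, hence
$$(P_tf^2)(e)-\big((P_tf)(e)\big)^2=\int_0^t P_s\big(\Gamma(P_{t-s}f,P_{t-s}f)\big)(e)\,ds.$$
By the left-translation trick the hypothesis $\Gamma(P_rf,P_rf)(e)\le\phi(r)\,P_r(\Gamma(f,f))(e)$ holds at every point of $G$; applying the positivity-preserving operator $P_s$ and using $P_sP_{t-s}=P_t$ gives $P_s\big(\Gamma(P_{t-s}f,P_{t-s}f)\big)(e)\le\phi(t-s)\,P_t(\Gamma(f,f))(e)$. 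Integrating in $s$, and noting $\int_0^t\phi(t-s)\,ds=\int_0^t\phi(r)\,dr\in(0,\infty)$ by local integrability and positivity of $\phi$, we obtain
$$(P_tf^2)(e)-\big((P_tf)(e)\big)^2\ \le\ \Big(\int_0^t\phi(t-s)\,ds\Big)\,P_t(\Gamma(f,f))(e)=\frac{1}{2a_t}\,P_t(\Gamma(f,f))(e),$$
with $a_t$ defined by $a_t\int_0^t\phi(t-s)\,ds=\tfrac12$. This is precisely the local estimate at $e$, so the first equivalence yields the spectral gap of size $a_t$ for every $t>0$.

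\emph{Main obstacle.} The conceptual content is short; the real work is in making the interpolation rigorous — showing that $\psi$ is differentiable with the claimed derivative, that the functions $P_{t-s}f$, $\mathcal{L}P_{t-s}f$ and $\Gamma(P_{t-s}f,P_{t-s}f)$ are smooth and integrable enough against the heat kernel $p(s,\cdot)$ for differentiation under the expectation and Fubini to be legitimate, and that $\partial_s P_{t-s}f=-\mathcal{L}P_{t-s}f$ and $\partial_s P_sh=P_s\mathcal{L}h=\mathcal{L}P_sh$ hold on the relevant domains. Here one leans on H\"ormander's theorem (smoothness of $P_{t-s}f$ for $s<t$, and kernel bounds for the integrability) together with a short limiting argument as $s\to0$ and $s\to t$. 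None of this is deep, but it is where the analytic care has to be spent.
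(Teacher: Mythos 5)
Your proof is correct and follows essentially the same route as the paper: the identity $\int h\,p_t\,d\mu=(P_th)(e)$ plus translation invariance for the equivalence, and the interpolation $s\mapsto P_s\bigl((P_{t-s}f)^2\bigr)$ with derivative $P_s\bigl(\Gamma(P_{t-s}f,P_{t-s}f)\bigr)$ for the sufficient condition. If anything, you are slightly more careful than the paper in spelling out why the pointwise gradient bound at $e$ upgrades to all of $G$ before $P_s$ is applied.
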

\begin{proof}
Since $\mu^{W_t}$ is equal to the law of $X_t^e$, one has
$\int f \mu^{W_t} = P_t f (e)$ for every $f \in \CC_0^\infty(G)$.
The equivalence of the first two statements then follows from \eref{e:gap} and the fact that the translation invariance 
of \eref{e:SDEflat} implies
that if the bound holds at some $g$, it must hold for all $g \in G$. We fix a test function $ f :G \to \mathbb{R} $ as well as $ t > 0 $ and consider
$$
H(s) = P_s ((P_{t-s} f)^2)
$$
for $ 0 \leq s \leq t $. The derivative of this function equals
$$
H'(s)=P_s(\Gamma(P_{t-s}f,P_{t-s}f))
$$
and therefore -- assuming the third statement -- we obtain
$$
H'(s) \leq \phi(t-s) P_t (\Gamma(f,f)).
$$
Whence we can conclude
$$
H(t) - H(0) \leq \int_0^t \phi(t-s) ds P_t(\Gamma(f,f)),
$$
which is the second of the two equivalent assertions for an appropriately chosen $ a $.
\end{proof}
\begin{remark}
We can replace the Lie group $G $ by a general manifold $ M $, on which we are given a hypo-elliptic, reversible diffusion $ X $ with ``sum of the squares'' generator $ \mathcal{L} $. Then the analogous statement holds, in particular local Poincar\'e inequalities on $ M $ for $ \mathcal{L} $ lead to a spectral gap for the OU-type process $ \mathcal{L}^t $ with $ t > 0 $.
\end{remark}
\begin{corollary}\label{cor:SGnilpotent}
Let $ G $ be a free, nilpotent Lie group with $ d $ generators $ e_1,\ldots,e_d $ of step $m \geq 1$, and consider
$$
\mathcal{L} = \frac{1}{2} \sum_{i=1}^d e_i^2,
$$
then the operator $ \mathcal{L}^t $ has a spectral gap of size $ a_t = \frac{1}{2Kt} $ on $ L^2(p_t \, \mu) $ for some constant $K > 0$.
\end{corollary}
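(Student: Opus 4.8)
The plan is to obtain the spectral gap from Theorem~\ref{basic_equivalence} by exhibiting a \emph{constant} admissible $\phi$. Concretely, I would show that the commutation estimate
$$
\Gamma(P_t f, P_t f)(e) \leq K\, P_t(\Gamma(f,f))(e)
$$
holds for all $f \in \CC_0^\infty(G)$ and all $t \geq 0$ with one and the same constant $K > 0$ (automatically $K\geq 1$, as one sees by letting $t\to 0$). Once this is in hand, $\phi \equiv K$ is strictly positive and locally integrable, so Theorem~\ref{basic_equivalence} applies, and the defining relation $a_t \int_0^t \phi(t-s)\,ds = \tfrac12$ becomes $a_t K t = \tfrac12$, i.e.\ $a_t = \tfrac{1}{2Kt}$ — exactly the claim with this $K$.

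For the commutation estimate at the single time $t=1$ I would invoke the Driver--Melcher inequality for $\mathcal{L} = \tfrac12\sum_i e_i^2$ on the free nilpotent group $G$ of step $m$ (the Heisenberg case being the original \cite{mel:04}), and let $K$ be its constant. To propagate the bound from $t=1$ to all $t>0$ the key tool is the canonical dilation structure of $G$. Writing $\mathfrak{g}=\bigoplus_{j=1}^m\mathfrak{g}_j$ for the grading with $\mathfrak{g}_1=\mathrm{span}(e_1,\dots,e_d)$ and $\mathfrak{g}_{j+1}=[\mathfrak{g}_1,\mathfrak{g}_j]$, the linear maps acting by $\lambda^j$ on $\mathfrak{g}_j$ are Lie-algebra automorphisms and integrate to group automorphisms $\delta_\lambda$ of $G$ fixing $e$. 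A one-line computation with left-invariant vector fields gives the homogeneity $e_i(f\circ\delta_\lambda)=\lambda\,(e_if)\circ\delta_\lambda$, hence $\mathcal{L}(f\circ\delta_\lambda)=\lambda^2(\mathcal{L}f)\circ\delta_\lambda$ and $\Gamma(f\circ\delta_\lambda,f\circ\delta_\lambda)=\lambda^2\,\Gamma(f,f)\circ\delta_\lambda$, and — using essential self-adjointness of $\mathcal{L}$ on $\CC_0^\infty(G)$, already exploited in the proof of Proposition~\ref{globalexistence} — the semigroup scaling identity $(P_{\lambda^2 t}f)\circ\delta_\lambda = P_t(f\circ\delta_\lambda)$. (Alternatively this last identity follows from Brownian scaling applied to \eref{e:SDEflat}, since $s\mapsto\delta_\lambda X^x_{s/\lambda^2}$ again solves \eref{e:SDEflat} started at $\delta_\lambda x$.)

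With these relations the remaining step is bookkeeping. Fixing $t>0$ and setting $g=f\circ\delta_{\sqrt t}$, the scaling identity reads $P_1 g = (P_tf)\circ\delta_{\sqrt t}$; applying $e_i$ and evaluating at $e$ (using $\delta_{\sqrt t}(e)=e$ and $e_i(h\circ\delta_{\sqrt t})(e)=\sqrt t\,(e_ih)(e)$) gives $\Gamma(P_tf,P_tf)(e)=t^{-1}\Gamma(P_1g,P_1g)(e)$. Applying the scaling identity instead to $\Gamma(f,f)$, together with $\Gamma(g,g)=t\,\Gamma(f,f)\circ\delta_{\sqrt t}$, gives $P_t(\Gamma(f,f))(e)=t^{-1}(P_1\Gamma(g,g))(e)$. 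Substituting the $t=1$ Driver--Melcher bound $\Gamma(P_1g,P_1g)(e)\leq K(P_1\Gamma(g,g))(e)$ and cancelling the common factor $t^{-1}$ yields the desired estimate at general $t$ with the same $K$; the case $t=0$ is trivial since $K\geq1$. Theorem~\ref{basic_equivalence} then finishes the proof.

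The genuinely hard input is the Driver--Melcher inequality itself on the free nilpotent group of step $m$ — this is precisely where the structure of $G$ enters, and beyond the Heisenberg group it is a nontrivial theorem. Everything else reduces to the three homogeneity relations above plus the semigroup scaling identity; the one point to handle with care there is to legitimately invoke uniqueness of the heat semigroup (equivalently, stochastic completeness / essential self-adjointness of $\mathcal{L}$), which however holds on any nilpotent group by polynomial volume growth and has in any case already been used in the paper.
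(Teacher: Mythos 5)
Your proposal is correct and follows essentially the same route as the paper: the paper's proof simply cites Melcher's thesis (and Driver--Melcher) for the bound $\Gamma(P_tf,P_tf)(e) \leq K\,P_t(\Gamma(f,f))(e)$ with a constant $K$ uniform in $t\geq 0$, and then applies Theorem~\ref{basic_equivalence} with $\phi\equiv K$ to get $a_tKt=\tfrac12$. Your explicit dilation/scaling argument reducing general $t$ to $t=1$ is a correct elaboration of why the constant is time-independent on a stratified group, but it is content the paper absorbs into the citation rather than a different approach.
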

\begin{proof}
Due to the results of the very interesting Ph.D.-thesis \cite{mel:04} (see also \cite{drimel:07}), there is a constant $ K $ such that the bound
$$
\Gamma(P_tf,P_tf)(e) \leq K P_t(\Gamma(f,f))(e)
$$
holds true for all test functions $ f \in \CC_0^\infty(G) $ and for all times $ t \geq 0 $. This shows that $ a_t K t = \frac{1}{2}$, due to the assertions of Theorem \ref{basic_equivalence}.
\end{proof}
\begin{corollary}\label{cor:su2}
Let $ G = SU(2) $ be the Lie group of unitary matrices on $ \mathbb{C}^2 $ with Lie algebra $ \mathfrak{g} = \mathfrak{su}(2) = {\langle i,j,k \rangle}_{\mathbb{R}} $ with the usual commutation relations, i.e.~ $ [i,j] = 2k $ and its cyclic variants. Consider
$$
\mathcal{L} = \frac{1}{2}(i^2 + j^2),
$$
where we understand the elements $ i,j $ as left-invariant vector fields on $G$, then $ \mathcal{L}^t $ has a spectral gap of size
$ a_t = \frac{1}{2Kt} $ on $ L^2(p_t \, \mu) $ for some constant $K > 0$.
\end{corollary}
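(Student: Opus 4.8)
As in the proof of Corollary~\ref{cor:SGnilpotent}, the natural route is via Theorem~\ref{basic_equivalence}: it is enough to exhibit a single constant $K>0$, \emph{independent of $t$}, such that the Driver--Melcher type gradient estimate
$$\Gamma(P_t f,P_t f)(e)\le K\,P_t(\Gamma(f,f))(e)$$
holds for all $t\ge 0$ and all test functions $f$; one then reads off $a_t=\frac{1}{2Kt}$ by applying the last part of Theorem~\ref{basic_equivalence} with $\phi\equiv K$. I would establish this estimate separately in the small-time and large-time regimes and then take the larger of the two constants.

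For small times the key point is that the nilpotent approximation of $(\mathfrak{su}(2);i,j)$ --- grade $i,j$ with weight $1$ and $k=\frac{1}{2}[i,j]$ with weight $2$ and pass to the associated graded Lie algebra --- is exactly the three-dimensional Heisenberg algebra, i.e.\ the free nilpotent algebra on two generators of step $2$, for which the gradient estimate holds with a universal constant $K_{\mathbb{H}}$ by the case $d=2$, $m=2$ of Corollary~\ref{cor:SGnilpotent} (resting on \cite{mel:04}). Working in exponential coordinates around $e$, applying the anisotropic dilations adapted to this grading, and rescaling the generators by $\epsilon$ --- equivalently rescaling time by $\epsilon^2$, since the semigroup generated by $\frac{\epsilon^2}{2}(i^2+j^2)$ at time $t$ equals $P_{\epsilon^2 t}$ --- one obtains, in the sense of the Rothschild--Stein nilpotent approximation, convergence of the rescaled operators, their heat kernels, and the relevant horizontal derivatives to those of the Heisenberg sub-Laplacian, uniformly on compact sets. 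Since for small $t$ both sides of the estimate only feel $f$ on an $O(\sqrt t)$-neighbourhood of $e$, passing to the limit in the Heisenberg estimate yields a constant $K_1$ and a threshold $t_0>0$ with $\Gamma(P_tf,P_tf)(e)\le K_1\,P_t(\Gamma(f,f))(e)$ for all $0<t\le t_0$. Alternatively one can avoid this limiting argument by an explicit Peter--Weyl analysis of the subelliptic heat kernel on $SU(2)$, reducing $\mathcal L$ to finite blocks indexed by the irreducible representations and obtaining the estimate for all $t$ directly.

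For $t\ge t_0$ the estimate is comparatively soft, because $SU(2)$ is compact. The hypoelliptic semigroup $P_t$ has a spectral gap on $L^2(\mu)$, so that for mean-zero $g$ one has $\|P_s g\|_{L^2(\mu)}\le e^{-\lambda s}\|g\|_{L^2(\mu)}$ and $\int\Gamma(g,g)\,\mu\ge c\|g\|_{L^2(\mu)}^2$ for some $\lambda,c>0$; subelliptic smoothing gives $\Gamma(P_{t_0}g,P_{t_0}g)(e)\le C_{t_0}^2\|g\|_{L^2(\mu)}^2$; and $p(t,\cdot)$ is strictly positive with $c_0(t):=\inf_x p(t,x)$ bounded below on $[t_0,\infty)$. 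Replacing $f$ by $f-\int f\,\mu$, which changes neither side, and writing $P_tf=P_{t_0}(P_{t-t_0}f)$, one gets $\Gamma(P_tf,P_tf)(e)\le C_{t_0}^2 e^{-2\lambda(t-t_0)}\|f\|_{L^2(\mu)}^2$, while $P_t(\Gamma(f,f))(e)\ge c_0(t)\int\Gamma(f,f)\,\mu\ge c\,c_0(t)\|f\|_{L^2(\mu)}^2$; hence the ratio stays bounded on $[t_0,\infty)$, and beyond a finite time it is in fact exponentially small. The desired $K$ is the maximum of $K_1$ and this bound.

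The only genuinely delicate step is the small-time analysis: making the nilpotent-approximation limit rigorous --- uniform convergence of the rescaled heat kernels and their horizontal derivatives, together with the tail control justifying the localisation near $e$ --- and, above all, checking that the constant inherited from the Heisenberg inequality does not deteriorate as $\epsilon\to 0$ uniformly in $f$, so that the final $K$ is truly time-independent. The representation-theoretic alternative instead shifts the work to bounding, uniformly in the spin parameter, the ratio between the quadratic form attached to $\Gamma$ and the one controlling the decay of $P_t$ on each irreducible block.
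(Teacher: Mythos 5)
Your overall strategy is the correct one and coincides with the paper's at the structural level: by Theorem~\ref{basic_equivalence} it suffices to produce a single time-independent constant $K$ such that $\Gamma(P_tf,P_tf)(e)\le K\,P_t(\Gamma(f,f))(e)$ for all $t$ and all test functions, and then $a_t=\frac{1}{2Kt}$ follows by taking $\phi\equiv K$. The paper, however, does not prove this gradient bound itself; it simply invokes \cite{baubon:08}, Proposition~4.20, where Baudoin and Bonnefont establish precisely this uniform Driver--Melcher inequality for the subelliptic heat semigroup on $SU(2)$. Your large-time argument (Poincar\'e inequality for $\mathcal L$ on the compact group, hypoelliptic smoothing of $P_{t_0}$ from $L^2(\mu)$ into $C^\infty$, and the uniform positive lower bound on $p(t,\cdot)$ for $t\ge t_0$) is sound and could be made rigorous without difficulty.

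The genuine gap is the small-time regime, and it is not a detail: it is the entire content of the cited result. The nilpotent approximation is identified correctly (the tangent cone of $(\mathfrak{su}(2);i,j)$ with the weights you assign is indeed the Heisenberg algebra, i.e.\ the free nilpotent algebra with $d=2$, $m=2$, to which Corollary~\ref{cor:SGnilpotent} applies with a universal constant). But convergence of the rescaled semigroups, heat kernels and horizontal derivatives to their Heisenberg counterparts is an asymptotic statement as $t\to 0$, and it does not by itself transfer the inequality with a constant valid uniformly over all $f\in \CC_0^\infty$ and all $t\in(0,t_0]$: the error terms of the approximation would have to be dominated by the same quantity $P_t(\Gamma(f,f))(e)$ that appears on the right-hand side, with constants that do not degenerate as $t\to 0$, and nothing in the soft limiting argument delivers this. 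This is exactly the difficulty that forces Melcher's proof on nilpotent groups \cite{mel:04} to proceed by explicit commutator/iteration arguments rather than by approximation, and that forces \cite{baubon:08} to carry out a representation-theoretic analysis on $SU(2)$. Your closing paragraph concedes the point. As written, the proposal is a plausible programme rather than a proof; to close it you would either have to make the nilpotentization quantitative and uniform in $f$ (not available off the shelf in the generality you invoke), or actually execute the Peter--Weyl computation you mention as an alternative --- which is, in essence, what the cited Proposition~4.20 of \cite{baubon:08} does, and why the paper's proof is a one-line citation.
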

\begin{proof}
The result follows from \cite{baubon:08}[Prop. 4.20].
\end{proof}

\subsection{Generalisation to homogeneous spaces}
\label{sec:Homogeneous}

Consider now  $ M $ a compact homogeneous space with respect to the Lie group $ G $, i.e.~we have a (right) transitive action
$\hat \pi \colon G \times M \to M$ of $G$ on $M$. We assume that there exists a measure $ \mu^M $ on $ M $ which is invariant with respect to this action. We also assume that we are given a family $ V_1,\ldots,V_d $ of left-invariant vector fields on $G$
that generate its entire Lie algebra $ \mathfrak{g} $ as before. These vector fields induce fundamental vector fields $V_i^M $ on $ M $ by means of the action $\hat \pi$.

By choosing an `origin' $o \in M$, we obtain a surjection $\pi \colon G \to M$ by $\pi(g) = \hat \pi(g,o)$. The vector fields $ V_1,\ldots,V_d $ and $ V_1^M,\ldots,V_d^M $ are consequently $\pi$-related. Due to the invariance of $\mu^M$ with respect to the action $\hat \pi$, the vector fields $ V_i^M $ are anti-symmetric operators on $ L^2(\mu^M) $ and the generator
$$
\mathcal{L}^M = \frac{1}{2} \sum_{i=1}^d {(V_i^M)}^2 
$$
is consequently symmetric on $ L^2(\mu^M) $. In particular we have
$$
(V_i^M f)\circ \pi = V_i(f \circ \pi)
$$
for $ i =1,\ldots,d$. The local Driver-Melcher inequality on $ G $ translates to the same inequality on $ M $ by means of
$$
P^M_t (f)\circ \pi  = P_t(f \circ \pi) 
$$
for test functions $ f: M \to \mathbb{R} $, hence we obtain the corresponding Driver-Melcher inequality on $ M $ with the same constants, too.

\section{A simple result on simulated annealing}

By comparison with natural OU-processes on the homogeneous space $M$ we can obtain spectral gap results for quite general classes of potentials. For later purposes, namely for applications to simulated annealing algorithms, we shall state a parametrized version of a simple perturbation result.
\begin{theorem}\label{theo:pertPot}
Let $M$ be a homogeneous space. Let $ V_{\varepsilon} :M \to \mathbb{R} $ be a family of potentials $ V_{\eps} $ with
$$
|V_{\varepsilon} + \log p(\varepsilon,\cdot\,)| \leq  D_\eps
$$
for $ 0 < \varepsilon \leq 1 $, and constants $ D_\eps > 0 $. Assume furthermore that a Poincar\'e inequality holds for $ \mathcal{L}^{\varepsilon} $, i.e.
\begin{equ}[e:Poincareassum]
a_\eps P_{\varepsilon}(f^2)(e)  \leq P_{\varepsilon} (\Gamma(f,f))(e)
\end{equ}
for test functions $ f \in \CC_0^\infty(M)$ with $ P_{\varepsilon} f(e) = 0 $ and some constant $ a_{\varepsilon} >0 $ and $ 0 < \varepsilon \leq 1 $. Then one has $ \exp(-V_{\eps}) \in L^1(\mu) $ and the Poincar\'e inequality
\begin{equ}[e:Poincareconcl]
\int f^2(x) \exp(-V_{\eps}(x)) \mu(dx) \leq C_\eps \int \Gamma(f,f)(x) \exp(-V_{\eps}(x)) \mu(dx)
\end{equ}
holds for all test functions $ f \in \CC_0^\infty(M)$ with $ \int f(x) \exp(-V_{\eps}(x)) \mu(dx) = 0 $ and some constant $ C_\eps = \frac{\exp(2D_\eps)}{a_\eps} > 0 $. In particular, this leads to a spectral gap of size at least 
$$ \frac{1}{C_\eps}= \frac{a_\eps}{\exp(2D_{\eps})} $$ for $ \mathcal{L}^{V_{\eps}} $.
\end{theorem}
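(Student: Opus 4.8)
The plan is to run a Holley--Stroock perturbation argument, comparing the finite measure $\mu^{V_\eps}=\exp(-V_\eps)\,\mu$ with the probability measure $\pi_\eps:=p(\eps,\cdot)\,\mu$. The latter is exactly the law of $X_\eps^e$, so that $\int f\,d\pi_\eps=P_\eps f(e)$ and $\int\Gamma(f,f)\,d\pi_\eps=P_\eps\bigl(\Gamma(f,f)\bigr)(e)$ for every test function $f$, just as in the proof of Theorem~\ref{basic_equivalence}. First I would unpack the hypothesis: with $W_\eps=-\log p(\eps,\cdot)$ the bound $|V_\eps+\log p(\eps,\cdot)|\le D_\eps$ reads $|V_\eps-W_\eps|\le D_\eps$, which I would rewrite as the pointwise estimate
\begin{equ}
\exp(-D_\eps)\,p(\eps,x)\ \le\ \exp(-V_\eps(x))\ \le\ \exp(D_\eps)\,p(\eps,x)\qquad(x\in M)\;.
\end{equ}
Integrating the right-hand inequality against $\mu$ and using $\int p(\eps,\cdot)\,\mu=1$ gives $\int\exp(-V_\eps)\,\mu\le\exp(D_\eps)<\infty$, which is the first assertion; the two inequalities together give $\exp(-D_\eps)\,\pi_\eps\le\mu^{V_\eps}\le\exp(D_\eps)\,\pi_\eps$ as (positive, finite) measures.

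Next I would put the assumed local Poincar\'e inequality \eref{e:Poincareassum} into its equivalent ``variance'' form: since $\Gamma$ annihilates constants and $\pi_\eps$ is a probability measure, replacing $f$ by $f-\int f\,d\pi_\eps$ shows that \eref{e:Poincareassum} is equivalent to $a_\eps\bigl(\int f^2\,d\pi_\eps-(\int f\,d\pi_\eps)^2\bigr)\le\int\Gamma(f,f)\,d\pi_\eps$ for \emph{every} test function $f$. The heart of the proof is then a short chain of inequalities. Fix a test function $f$ with $\int f\,d\mu^{V_\eps}=0$ and set $m:=\int f\,d\pi_\eps$. Because $f$ has zero $\mu^{V_\eps}$-mean, $\int f^2\,d\mu^{V_\eps}\le\int(f-c)^2\,d\mu^{V_\eps}$ for every constant $c$, in particular for $c=m$. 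Using $\mu^{V_\eps}\le\exp(D_\eps)\,\pi_\eps$ and the definition of $m$, $\int(f-m)^2\,d\mu^{V_\eps}\le\exp(D_\eps)\int(f-m)^2\,d\pi_\eps=\exp(D_\eps)\bigl(\int f^2\,d\pi_\eps-m^2\bigr)$; the variance form of \eref{e:Poincareassum} bounds this by $\tfrac{\exp(D_\eps)}{a_\eps}\int\Gamma(f,f)\,d\pi_\eps$; and finally $\pi_\eps\le\exp(D_\eps)\,\mu^{V_\eps}$ bounds it by $\tfrac{\exp(2D_\eps)}{a_\eps}\int\Gamma(f,f)\,d\mu^{V_\eps}=C_\eps\int\Gamma(f,f)\,d\mu^{V_\eps}$. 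This is precisely \eref{e:Poincareconcl}, and, by the characterisation of the spectral gap in terms of $\Gamma$ and $\mu^U$ recalled in Section~2 (applied with $\mu^U=\mu^{V_\eps}$), it yields a spectral gap of size at least $1/C_\eps$ for $\mathcal{L}^{V_\eps}$.

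I do not expect a genuine obstacle: once the two-sided density comparison is in hand, everything is elementary. The only delicate point is the bookkeeping around the mean-zero constraints --- namely, that a Poincar\'e inequality restricted to functions of zero $\mu^{V_\eps}$-mean (respectively zero $\pi_\eps$-mean) is equivalent, via $\Gamma(\mathrm{const})=0$, to the corresponding variance inequality for all test functions, and that in estimating $\int f^2\,d\mu^{V_\eps}$ one is permitted to subtract the ``wrong'' constant $m=\int f\,d\pi_\eps$ exactly because $f$ already has zero $\mu^{V_\eps}$-mean. Beyond that, the estimate $\exp(-D_\eps)\,\pi_\eps\le\mu^{V_\eps}\le\exp(D_\eps)\,\pi_\eps$ is simply invoked twice, once on the $L^2$-norm and once on the Dirichlet form, which is where the factor $\exp(2D_\eps)$ in $C_\eps$ comes from.
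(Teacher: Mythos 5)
Your proposal is correct and follows essentially the same route as the paper: both rest on the two-sided comparison $\exp(-D_\eps)\,p(\eps,x)\le\exp(-V_\eps(x))\le\exp(D_\eps)\,p(\eps,x)$ and the Holley--Stroock transfer of the Poincar\'e inequality from $p(\eps,\cdot)\,\mu$ to $\exp(-V_\eps)\,\mu$. The only difference is that you spell out the chain of inequalities (the passage to the variance form and the subtraction of the ``wrong'' mean $m$) that the paper compresses into ``hence we deduce \eref{e:Poincareconcl}''.
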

\begin{proof}
It follows immediately from the inequality
$$ p(\varepsilon,x) = \exp(-V_{\eps}(x)) \exp(V_{\eps}(x)) p(\varepsilon,x) \geq \exp(-D_{\eps}) \exp(-V_{\eps}(x)) $$
that $\exp(-V_{\eps}) \in L^1(\mu)$. Furthermore,
$$ \exp(-V_{\eps}(x)) =  \frac{1}{p(\varepsilon,x)\exp(V_{\eps}(x))} p(\varepsilon,x) \geq \exp(-D_{\eps}) p(\varepsilon,x)$$
for all $ x \in M $ by assumption. Hence we deduce \eref{e:Poincareconcl} with $ C_\eps = \frac{\exp(2D_\eps)}{a_\eps} $
from \eref{e:Poincareassum}.
\end{proof}
\begin{remark}
See \cite{bakledwan:06} for results on unbounded perturbations.
\end{remark}

Throughout the remainder of this section we assume that $M$ is a compact homogeneous space with respect to a connected Lie group $G$. We consider the same structures as in Section~\ref{sec:Homogeneous} on $ M $, but we omit the index $ M $ on vector fields and measures in order to improve readability. We shall furthermore impose the following assumption on the spectral gap:
\begin{assumption}\label{ass:gap_M}
There is a constant $ K > 0 $ such that
$$
\Gamma(P_tf,P_tf)(e) \leq K P_t(\Gamma(f,f))(e)
$$
holds true for all test functions $ f \in \CC_0^\infty(M) $ and $ 0 \leq t \leq 1 $.
\end{assumption}

We prepare now a quantitative simulated annealing result under the previous Assumption \ref{ass:gap_M} on $M$ and follow closely the lines of \cite{HolStr88}. Let $ U: M \to \mathbb{R} $ be a smooth potential.
The idea is to search for minima of $ U $ by sampling the measure 
$$ \frac{1}{Z_{\eps}}\exp(-\frac{U}{\eps^2}) \mu. $$
Recall that we do always assume that $ Z_{\eps} = \int_M \exp(-\frac{U}{\eps^2}) \mu < \infty $.
Sampling this measure is performed by looking at the invariant measure of
$$
\mathcal{L}_{\varepsilon} =  \mathcal{L} - \frac{1}{2} \Gamma\bigl(\frac{U}{\varepsilon^2},\cdot\,\bigr).
$$
Notice that the previous operator satisfies
$$
\varepsilon^2 \mathcal{L}_{\varepsilon} = \varepsilon^2 \mathcal{L} - \frac{1}{2} \Gamma(U,.),
$$
and a spectral gap for $ \varepsilon^2 \mathcal{L}_{\varepsilon} $ is a spectral gap for the diffusion process
$$
dY^y_t = V_0(Y^y_t) dt + \sum_{i=1}^d \varepsilon V_i(Y^y_t)\circ dB^i_t, \quad Y^y_0=y \in G,
$$
with $ V_0 f = - \frac{1}{2} \Gamma(U,f) $. Consequently we know -- given strong existence -- that the law of $ Y^y_t $ converges to $ \frac{1}{Z_{\eps}}\exp(-\frac{U}{\varepsilon^2}) \mu $ and concentrates therefore around the minima of $ U $. In this consideration $ \eps $ is considered to be fixed. Next we try to obtain a time-dependent version of the previous considerations, leading to a process concentrating precisely at the minima of $ U $.

In the following theorem we quantify the speed of convergence towards the invariant measure. We denote by $\mu_{\eps}$ the probability measure invariant for $\CL_{\eps}$ and we use the notation
$$
\operatorname{var}_{\varepsilon} (f)= {\langle{(f - {\langle f \rangle }_\varepsilon)}^2\rangle }_{\varepsilon}
$$
with $\scal{f}_\eps = \int_M f(g)\, \mu_{\eps}(dg)$ for the variance with respect to this measure. First we estimate the spectral gap along a cooling schedule $ t \mapsto \eps(t) $, then we prove that the measure concentrates around the minima of $U$ even in a time-dependent setting.

\begin{theorem}\label{dirichlet-form-estimate}
Let $ U :M \to \mathbb{R} $ be a smooth functions, $ D $ a constant, $ x_0 \in M $ a point such that
$$
|U(x) + \eps^2 \log p(\eps^2,x_0,x)| \leq D\;,
$$
for all $ x \in M $. Then there exist constants $ R,c > 0$ such that for $ \varepsilon(t) = \frac{c}{\sqrt{\log(R+t)}} $
$$
\operatorname{var}_{\varepsilon(t)}(f) \leq K{(R+t)} \scal{\frac{1}{2}\Gamma(f,f)}_{\eps(t)}\;,
$$
for all test functions $ f \in \CC_0^\infty(M)$ and $ t \geq 0 $.
\end{theorem}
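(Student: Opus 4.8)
The plan is to recognise the operator $\CL_\eps = \CL - \tfrac12\Gamma\bigl(U/\eps^2,\cdot\,\bigr)$, whose invariant probability measure is $\mu_\eps$, as a perturbed operator $\CL^{V}$ with potential $V := U/\eps^2$, and to feed this into Theorem~\ref{theo:pertPot}, applied with the point $x_0$ in the role of the origin. The one genuine subtlety is that $\eps$ plays two roles: it enters $\mu_\eps$ through the scaled potential $U/\eps^2$, but it enters the heat kernel comparison and the Driver--Melcher spectral gap only through the time $\eps^2$; accordingly the parameter of Theorem~\ref{theo:pertPot} will be $\delta := \eps^2 \in (0,1]$. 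I would then verify the two hypotheses of that theorem, read off its conclusion, and finally choose $R,c$ so that the resulting Poincar\'e constant is dominated by $K(R+t)$ along the cooling schedule.

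For the hypotheses of Theorem~\ref{theo:pertPot}: dividing the standing assumption $|U + \eps^2\log p(\eps^2,x_0,\cdot\,)|\le D$ by $\eps^2$ gives $|V + \log p(\delta,x_0,\cdot\,)|\le D/\delta$, so its comparison hypothesis holds with $D_\delta = D/\delta$. For the required local Poincar\'e inequality \eref{e:Poincareassum} at $x_0$, I would note that, $\pi\colon G\to M$ being onto and the $V_i$ being $\pi$-related to the $V_i^M$, the local Driver--Melcher inequality of Assumption~\ref{ass:gap_M} (which by Section~\ref{sec:Homogeneous} holds at the origin) transfers to \emph{every} point of $M$, in particular to $x_0$; Theorem~\ref{basic_equivalence}, applied on $M$ at $x_0$ with $\phi\equiv K$, then gives $\CL^{\delta}$ a spectral gap of size $\tfrac1{2K\delta}$ on $L^2\bigl(p(\delta,x_0,\cdot\,)\,\mu\bigr)$, i.e.\ \eref{e:Poincareassum} with constant $a_\delta = 1/(K\delta)$.

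Theorem~\ref{theo:pertPot} would then yield $\exp(-U/\eps^2)\in L^1(\mu)$, so that $\mu_\eps$ is well defined, together with the Poincar\'e inequality \eref{e:Poincareconcl} for $\CL^{V}=\CL_\eps$ with constant $C_\eps = \exp(2D_\delta)/a_\delta = K\,\eps^2\exp\bigl(2D/\eps^2\bigr)$. Applying \eref{e:Poincareconcl} to $f-\scal{f}_\eps$ and using that the Dirichlet form of $\CL_\eps$ is $f\mapsto\scal{\tfrac12\Gamma(f,f)}_\eps$, I would obtain
$$
\operatorname{var}_\eps(f)\;\le\;2K\,\eps^2\exp\bigl(2D/\eps^2\bigr)\,\scal{\tfrac12\Gamma(f,f)}_\eps
$$
for all $f\in\CC_0^\infty(M)$ and all $0<\eps\le 1$. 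It remains to choose $c$ and $R$ so that, along $\eps(t)=c/\sqrt{\log(R+t)}$, one has $\eps(t)^2\le 1$ (so that the displayed bound and the standing hypothesis both apply) and $2\eps(t)^2\exp\bigl(2D/\eps(t)^2\bigr)\le R+t$ for every $t\ge 0$. Since $\eps(t)^2 = c^2/\log(R+t)$, the latter reads $2c^2/\log(R+t)\le(R+t)^{1-2D/c^2}$; choosing $c^2>2D$ makes this exponent positive, whence the left side decreases and the right side increases in $t$, so it suffices to check $t=0$ and to impose $R\ge e^{c^2}$ --- both achieved by taking $R$ large in terms of $c,D,K$.

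I expect the main difficulty to be purely this bookkeeping: keeping the two time scales ($U/\eps^2$ versus $p(\eps^2,\cdot\,)$) consistent throughout the reduction, justifying that the local Driver--Melcher estimate may be invoked at $x_0$ rather than only at the origin, and carrying out the elementary but slightly delicate optimisation that forces $c^2>2D$ together with a sufficiently large $R$. Beyond that there is no analytic content not already contained in Theorems~\ref{basic_equivalence} and~\ref{theo:pertPot}.
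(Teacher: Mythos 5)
Your proposal is correct and follows essentially the same route as the paper: combine Assumption~\ref{ass:gap_M} (via Theorem~\ref{basic_equivalence}) with Theorem~\ref{theo:pertPot} applied to $V_\eps = U/\eps^2$, obtaining a spectral gap of order $\frac{1}{K\eps^2}\exp(-2D/\eps^2)$ for $\CL_\eps$, and then tune the cooling schedule. Your treatment is in fact more careful than the paper's on two points it glosses over --- the reparametrisation $\delta=\eps^2$ needed to match the time scales in Theorem~\ref{theo:pertPot}, and the transfer of the local Poincar\'e inequality from the origin to $x_0$ --- and your choice $c^2>2D$ is a harmless variant of the paper's $c^2=2D$.
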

\begin{proof}
We can start to collect results. Combining Assumption~\ref{ass:gap_M} with Theorem~\ref{theo:pertPot} applied for $ V_\eps = \frac{U}{\eps^2} $, we obtain that spectral gap for the operator $ \mathcal{L}_{\varepsilon} $ has size
at least
$$
\frac{1}{K \varepsilon^2} \exp\Bigl(-\frac{2D}{\varepsilon^2}\Bigr)
$$
for $ 0 < \varepsilon \leq 1 $, so that $ \varepsilon^2 \mathcal{L}_{\varepsilon} $ has a spectral gap of size at least
$$
\frac{1}{K} \exp\Bigl(-\frac{2D}{\varepsilon^2}\Bigr).
$$
We choose $ c^2 = 2D$ and $ R $ sufficiently large so that $ \varepsilon(t) \leq 1 $ for $ t \geq 0 $, and we conclude that
$$
K \exp\Bigl(\frac{2D}{\varepsilon(t)^2} \Bigr) \leq K {(R + t)}
$$
for all $ t \geq 0 $, which yields the desired result.
\end{proof}

We denote by $ Z $ the process with cooling schedule $ t \mapsto \varepsilon(t) $ as in the previous theorem,
$$
dZ^z_t = V_0(Z^z_t) dt + \sum_{i=1}^d \varepsilon(t) V_i(Z_t^z) \circ dB^i_t\;,
$$
where the drift vector field is given through $ V_0 f = - \frac{1}{2} \Gamma(U,f) $. Then the previous conclusion leads to the following proposition.
\begin{proposition}
Let $ f_t $ denote the Radon-Nikodym derivative of the law of $Z_t^z$ with respect to 
$ \mu_{\eps(t)} $ and let $$ u(t) := {\|f_t - 1 \|}^2_{L^2(\mu_{\eps(t)})} $$ denote its distance in $L^2(\mu_{\eps(t)})$ to $1$ (which corresponds to $\operatorname{var}_{\eps(t)}(f_t) $), then
$$
u'(t) \leq - \frac{1}{K(R+t)}u(t) + \frac{N}{c^2(R+t)} u(t) + \frac{2N}{c^2(R+t)} \sqrt{u(t)}
$$
for the constants $R$,$c$ and $K$ from Theorem \ref{dirichlet-form-estimate}, and $ N = \max U - \min U $.
\end{proposition}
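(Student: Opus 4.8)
The plan is to differentiate $u(t)=\operatorname{var}_{\eps(t)}(f_t)=\int_M f_t^2\,\mu_{\eps(t)}-1$ in time (the second equality because $\int f_t\,\mu_{\eps(t)}=\int\mu_{\eps(t)}=1$), carefully tracking the two sources of $t$-dependence: the evolution of the law of $Z_t^z$ and the $t$-dependence of the reference measure $\mu_{\eps(t)}$. Since $M$ is compact and $\mathcal{L}$ is hypo-elliptic, the frozen-time generators $\eps(t)^2\mathcal{L}+V_0$ are hypo-elliptic uniformly on compact time intervals, so the law of $Z_t^z$ has a smooth, strictly positive density with respect to $\mu$; as $\mu_{\eps(t)}$ is equivalent to $\mu$ on the compact $M$, $f_t$ is a genuine smooth positive function and, by a routine density argument, an admissible test function in the Poincar\'e inequalities below. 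I would indicate why the differentiation under the integral is legitimate and not belabour it.

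First I would pass to the fixed measure $\mu$: write $\rho_t=d\mu_{\eps(t)}/d\mu$, which is proportional to $\exp(-\beta(t)U)$ with $\beta(t)=\eps(t)^{-2}=c^{-2}\log(R+t)$, and let $g_t=f_t\rho_t$ be the density of the law of $Z_t^z$ with respect to $\mu$. The frozen-time generator of $Z$ is $A_t=\eps(t)^2\mathcal{L}_{\eps(t)}$, which is symmetric on $L^2(\mu_{\eps(t)})$ with Dirichlet form $-\int h\,A_t h\,\mu_{\eps(t)}=\eps(t)^2\scal{\tfrac12\Gamma(h,h)}_{\eps(t)}$, and $g_t$ solves the forward equation $\partial_t g_t=A_t^{\dagger}g_t$, where $A_t^{\dagger}$ is the adjoint of $A_t$ on $L^2(\mu)$. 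Differentiating $u(t)=\int_M g_t^2/\rho_t\,d\mu$ produces a ``dynamics'' term $2\int_M f_t\,\partial_t g_t\,d\mu$ and a ``reference-measure'' term $-\int_M f_t^2\,\partial_t\rho_t\,d\mu$.

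For the dynamics term, using $\partial_t g_t=A_t^{\dagger}g_t$, the definition of the adjoint, and the symmetry of $A_t$, one gets $2\int_M f_t\,A_t^{\dagger}g_t\,d\mu=2\int_M(A_t f_t)\,f_t\,\mu_{\eps(t)}=-2\eps(t)^2\scal{\tfrac12\Gamma(f_t,f_t)}_{\eps(t)}$. By the spectral gap of size at least $\tfrac{1}{K(R+t)}$ for $\eps(t)^2\mathcal{L}_{\eps(t)}$ established in the proof of Theorem~\ref{dirichlet-form-estimate} (via Assumption~\ref{ass:gap_M} and Theorem~\ref{theo:pertPot} applied with $V_\eps=U/\eps^2$ and $c^2=2D$), the Dirichlet form on $f_t$ dominates $\tfrac{1}{K(R+t)}\operatorname{var}_{\eps(t)}(f_t)$, so this term is at most $-\tfrac{1}{K(R+t)}u(t)$, the surplus factor $2$ being discarded.

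For the reference-measure term, differentiating $\log\rho_t=-\log Z_{\eps(t)}-\beta(t)U$ and using $\partial_t\log Z_{\eps(t)}=-\dot\beta(t)\scal{U}_{\eps(t)}$ gives $\partial_t\log\rho_t=-\dot\beta(t)\,(U-\scal{U}_{\eps(t)})$ with $\dot\beta(t)=\tfrac{1}{c^2(R+t)}$, so the term equals $\tfrac{1}{c^2(R+t)}\int_M f_t^2\,(U-\scal{U}_{\eps(t)})\,\mu_{\eps(t)}$. Since $\int_M(U-\scal{U}_{\eps(t)})\,\mu_{\eps(t)}=0$ and $\int_M f_t\,\mu_{\eps(t)}=1$, I would substitute $f_t^2=(f_t-1)^2+2(f_t-1)+1$ to rewrite this integral as $\int_M(f_t-1)^2(U-\scal{U}_{\eps(t)})\,\mu_{\eps(t)}+2\int_M(f_t-1)(U-\scal{U}_{\eps(t)})\,\mu_{\eps(t)}$, and bound it by $N\,u(t)+2N\sqrt{u(t)}$ using $\|U-\scal{U}_{\eps(t)}\|_\infty\le N$, the Cauchy--Schwarz inequality, and $\|f_t-1\|_{L^2(\mu_{\eps(t)})}^2=u(t)$. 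Adding the two contributions yields the asserted differential inequality. The only genuinely delicate point is justifying the term-by-term differentiation and the admissibility of $f_t$ as a test function in the Poincar\'e inequality; both are handled by the parabolic smoothing of the forward equation and the compactness of $M$, after which the argument is essentially bookkeeping.
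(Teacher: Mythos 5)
Your proof is correct and follows essentially the same route as the paper's: differentiate $u(t)$ into a dynamics term and a reference-measure term, bound the former via the spectral gap from Theorem~\ref{dirichlet-form-estimate}, and handle the latter by writing $f_t^2=(f_t-1)^2+2(f_t-1)+1$ together with $\|U-\scal{U}_{\eps(t)}\|_\infty\le N$ and Cauchy--Schwarz. If anything, your bookkeeping of the factor $\eps(t)^2$ in the Dirichlet form (compensated by the matching $\eps^2$ hidden in the spectral-gap estimate for $\eps^2\mathcal{L}_\eps$) is more careful than the paper's displayed computation, which silently drops it.
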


\begin{remark}\label{cKestimate}
We find $ c^2 > 3NK $, such that $ sup_{t \geq 0} u(t) $ is bounded from above by a constant depending on $ f_0 $, $c$, $R$. $N$ and $K$.
\end{remark}

\begin{proof}
The proof follows closely the lines of \cite{HolStr88}. By assumption we know that $ \operatorname{var}_{\eps(t)}(f_t) = u(t) = {||{f_t}||}^2_{L^2(\mu_{\eps(t)})} -1 $ and hence with the notation $ \beta(t) =  \frac{1}{{\varepsilon(t)}^2} $,
\begin{equs}
u'(t) &= - \scal{\Gamma(f_t,f_t)}_{\eps(t)} - \beta'(t) \int (U-{\langle U \rangle}_{\varepsilon(t)}) f_t^2 \mu_{t} \\ 
&= - \scal{\Gamma(f_t,f_t)}_{\eps(t)} - \beta'(t) \int (U-{\langle U \rangle}_{\varepsilon(t)}) {(f_t-1)}^2 \mu_{t} - \\
 &\quad - 2\beta'(t) \int (U-{\langle U \rangle}_{\varepsilon(t)}) (f_t-1) \mu_{t} \\
&\leq  - \frac{1}{K(R+t)}u(t) + \frac{N}{c^2(R+t)} u(t) + \frac{2N}{c^2(R+t)} \sqrt{u(t)}\;.
\end{equs}
Here, we used the Cauchy-Schwarz inequality and the conclusions of the previous Theorem \ref{dirichlet-form-estimate} in the last line.
\end{proof}

\begin{assumption}\label{ass:growth}
Let $ U : M \to \mathbb{R} $ be a potential on $M$ such that
$$
| U(x) - d(x,x_0)^2 | \leq D_1
$$
for some positive constant $ D_1 $, a point $ x_0 \in M $ and for all $ x \in M $. Here we denote by $ d(x,x_0)$ the Carnot-Carath\'eodory metric on $M$, see for instance \cite{lea87a,lea87b} and \cite{San84}.
\end{assumption}

\begin{remark} For non-compact manifolds $ M $ the limit
$$
\lim_{t \to 0} t \log p(t,x_0,x) = - {d(x_0,x)}^2
$$
is uniform on compact subsets of $M$, but usually not on the whole of $M$. An abelian, non-compact example where the limit is globally uniform is $ M=\mathbb{R}^d $. On the simplest non-compact and non-abelian example, the Heisenberg group $ G_d^2 $, the limit is not uniform, see recent work of H.-Q.~Li \cite{Li07}. Therefore we consider in our perturbation argument only compact manifolds $M$.
\end{remark}

On compact manifolds $M$ we know due to R.~ L\'eandre's beautiful results \cite{lea87a,lea87b} that
there is $ D_2 $ such that
$$
|{d(x_0,x)}^2+ \eps^2 \log p(\eps^2,x_0,x)| \leq D_2
$$
for $x \in M $. Hence we can conclude by the triangle inequality that the potential $ U $ satisfies the assumptions of Theorem \ref{dirichlet-form-estimate}.

\begin{theorem}
Assume Assumptions \ref{ass:gap_M} and \ref{ass:growth}, and assume that 
$$ sup_{t \geq 0} \|f_t\|_{L^2(\mu_{\eps(t)})} < \infty \; .$$
Define $U_0 = \inf_{x\in M}U(x)$ and, for every $\delta> 0$, denote by $A_\delta$ the set
$ A_{\delta} = \{x\in M\,| \, U(x) \geq U_0 + \delta\} $. Then we can conclude that
$$
\P\bigl(Z_t^z \in A_{\delta}\bigr) \leq M \sqrt{\mu_{\eps(t)}(A_{\delta})}
$$
for every $t > 0$ and every $ \delta \geq 0 $.
\end{theorem}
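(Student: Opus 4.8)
The plan is to deduce the bound directly from the definition of the Radon--Nikodym density $ f_t $ introduced in the previous Proposition, combined with a single application of the Cauchy--Schwarz inequality. First I would check that $ f_t $ is well defined: for $ t > 0 $ and $ \eps(t) \neq 0 $ the vector fields $ \eps(t) V_1,\ldots,\eps(t) V_d $ still satisfy H\"ormander's condition, so by H\"ormander's theorem the law $ \nu_t $ of $ Z_t^z $ admits a smooth density with respect to $ \mu $; since $ M $ is compact and $ U $ is smooth, $ \mu_{\eps(t)} = Z_{\eps(t)}^{-1}\exp(-U/\eps(t)^2)\,\mu $ is equivalent to $ \mu $ with density bounded above and below, hence $ f_t = d\nu_t/d\mu_{\eps(t)} $ exists and, by assumption, $ M := \sup_{t \geq 0}\|f_t\|_{L^2(\mu_{\eps(t)})} < \infty $ (note $ M \geq \|f_t\|_{L^1(\mu_{\eps(t)})} = 1 $, as it must be for consistency with the case $ \delta = 0 $). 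Then I would write
$$
\P\bigl(Z_t^z \in A_\delta\bigr) = \int_{A_\delta} f_t \, d\mu_{\eps(t)} = \int_M \one_{A_\delta}\, f_t \, d\mu_{\eps(t)}\;.
$$

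The remaining computational step is the Cauchy--Schwarz inequality in $ L^2(\mu_{\eps(t)}) $: since $ \mu_{\eps(t)} $ is a probability measure, $ \|\one_{A_\delta}\|_{L^2(\mu_{\eps(t)})}^2 = \mu_{\eps(t)}(A_\delta) $, so that
$$
\int_M \one_{A_\delta}\, f_t \, d\mu_{\eps(t)} \;\leq\; \|f_t\|_{L^2(\mu_{\eps(t)})}\,\sqrt{\mu_{\eps(t)}(A_\delta)} \;\leq\; M\,\sqrt{\mu_{\eps(t)}(A_\delta)}\;,
$$
which is exactly the asserted estimate, valid for every $ t > 0 $ and every $ \delta \geq 0 $.

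It remains to explain the role of the standing hypotheses and where the real difficulty sits. Assumptions \ref{ass:gap_M} and \ref{ass:growth}, together with L\'eandre's heat-kernel bounds quoted above, ensure that $ U $ satisfies the hypotheses of Theorem \ref{dirichlet-form-estimate}; that theorem, fed into the previous Proposition and into Remark \ref{cKestimate} under the quantitative choice $ c^2 > 3NK $, is precisely what yields a \emph{time-uniform} bound $ \sup_{t \geq 0}\|f_t\|_{L^2(\mu_{\eps(t)})} < \infty $. Thus the substantive content of the argument is the verification of this uniform $ L^2 $ bound, which here has been isolated as a hypothesis; granted it, the concentration estimate follows from the one-line Cauchy--Schwarz argument above, so that is where I expect no obstacle and the earlier Proposition is where the work actually lies. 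Finally, the statement becomes a genuine convergence result by observing that the Gibbs measures $ \mu_{\eps(t)} $ concentrate on $ \operatorname{argmin} U $ as $ \eps(t) \downarrow 0 $, so that $ \mu_{\eps(t)}(A_\delta) \to 0 $ as $ t \to \infty $ for every $ \delta > 0 $, whence $ \P(Z_t^z \in A_\delta) \to 0 $; the square root is the price paid for passing through $ L^2 $ rather than controlling $ f_t $ in $ L^\infty $.
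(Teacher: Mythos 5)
Your argument is correct and coincides with the paper's own proof: both write $\P\bigl(Z_t^z \in A_{\delta}\bigr) = \int_{A_{\delta}} f_t \, \mu_{\eps(t)}$ and apply the Cauchy--Schwarz inequality together with the assumed uniform bound $\sup_{t\geq 0}\|f_t\|_{L^2(\mu_{\eps(t)})} < \infty$. Your additional remarks on the well-definedness of $f_t$ and on where the real work lies (the time-uniform $L^2$ bound from the preceding Proposition and Remark~\ref{cKestimate}) are accurate but not needed for the statement as hypothesised.
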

\begin{proof} It follows from the Cauchy-Schwarz inequality that
$$
\P\bigl(Z_t^z \in A_{\delta}\bigr) = \int_{A_{\delta}} f_t \mu_{\eps(t)} \leq M \sqrt{\mu_{\eps(t)}(A_{\delta})},
$$
as required.
\end{proof}
\begin{remark}
Since $\lim_{\eps \to 0} \mu_\eps(A_{\delta}) = 0$ for every $\delta > 0$, we obtain that for all continuous bounded test functions $ f $, we have
$$
\E(f(Z_t^z)) \rightarrow f(x_0)\;,
$$
provided that there is only one element $ x_0 \in M $ such that $U(x_0) = U_0$.
\end{remark}
\begin{remark}
We can improve the previous result from $L^2$-estimates to $L^q$-estimates for $ {|| f_t - 1 ||}_{L^q(\mu_t)} $, for $ q > 2 $: this follows \cite[Theorem 2.2]{HolKusStr89} and the fact that the proof of \cite[Theorem 2.7]{HolKusStr89} applies due to a valid Sobolev inequality, i.e.~for every $ q > 2 $ there is a constant $ C_0 > 0 $ such that
$$
{|| f ||}^2_{L^q(\mu)} \leq C_0 \big( \frac{1}{2}\int_M \Gamma(f,f) \mu + {|| f ||}^2_{L^2(\mu)} \big)
$$
holds for all test functions $ f: M \to \mathbb{R} $. Such a Sobolev inequality can be found for instance in \cite[Section 3]{HorArt} or in \cite{San84}.
\end{remark}

\begin{remark}
We can apply hypo-elliptic simulated annealing to potentials on compact nilmanifolds with the respective subriemannian structure due to Corollary \ref{cor:SGnilpotent}, or we can apply it to potentials on $ SU(2) $ due to Corollary \ref{cor:su2}.

The implementation of those new algorithms can yield some advantages with respect to elliptic simulated annealing algorithms on Riemannian manifolds as described in \cite{HolKusStr89}. On the one hand the number of Brownian motions involved is smaller, such as the complexity of the SDE
$$
dZ^z_t = V_0(Z^z_t) dt + \sum_{i=1}^d \varepsilon(t) V_i(Z_t^z) \circ dB^i_t\;.
$$
as a whole, since less vector fields have to evaluated and the gradient $ V_0 $ is less complex being a horizontal gradient. The price to pay is a larger constant $ c > 0 $ in the rate of convergence. In cases where hypo-elliptic simulated annealing can be directly compared with elliptic simulated annealing on flat space (for instance on $3$-torus $ \mathbb{T}^3 $, where we have the flat euclidean structure and the subriemmanian structure of the Heisenberg torus with two generators) the elliptic algorithm is superior. This is due to the fact the one can choose the vector fields in the elliptic algorithm constant (on flat $ \mathbb{T}^3 $) which reduces the complexity considerably, whereas one has to apply more sophisticated vector fields in the case of the Heisenberg group.

For optimization on $ SU(2) $, where our theory applies due to Corollary \ref{cor:su2}, we have a visible advantage over the elliptic simulated annealing, since we need one more Brownian motion and one more vector field for the elliptic algorithm, and we cannot simplify the vector fields in the elliptic algorithm.
\end{remark}

%\bibliographystyle{Martin}
%\bibliography{refs}

\end{document}